\documentclass[11pt]{article}
\usepackage{amsmath, amssymb, amsthm, amsfonts}
\usepackage{authblk}
\usepackage{fixltx2e}
\usepackage[margin=1.3in]{geometry}

\theoremstyle{thmit} 
\newtheorem{thm}{Theorem}[section]
\newtheorem{lem}[thm]{Lemma}
\newtheorem{cor}[thm]{Corollary}
\newtheorem{prop}[thm]{Proposition}

\theoremstyle{thmrm} 

\newtheorem*{rem}{Remark}
\newtheorem*{oldproof}{Proof}
\renewenvironment{proof}[1][{}]{\begin{oldproof}[#1]}{\qed\end{oldproof}}

\usepackage{amsmath, amssymb, amsthm, amsfonts}

\usepackage{amssymb}
\usepackage{color}
\usepackage{amsmath}
\usepackage{amsthm}
\usepackage{comment}

\newcommand{\mbb}{\mathbb}
\newcommand{\N}{\mathbb N}
\newcommand{\Z}{\mathbb Z}
\newcommand{\R}{\mathbb R}

\newcommand{\Q}{\mathbb Q}
\newcommand{\Homeo}{\rm Homeo}
\let\ol=\overline
\let\ap=\alpha

\newcommand{\T}{\overline{T}}
\newcommand{\mT}{\mbb{T}}
\newcommand{\Sc}{\mathcal{S}}
\newcommand{\ms}{\mathfrak{S}}
\newcommand{\Id}{\rm Id}
\newcommand{\U}{\mathcal U}
\newcommand{\D}{\mathcal D}
\newcommand{\G}{\mathcal G}
\theoremstyle{remark}
\theoremstyle{definition}

\title{Distality of Certain Actions on $p$-adic Spheres}
\author{Riddhi Shah}
\author{Alok Kumar Yadav}
\affil{Jawaharlal Nehru University, New Delhi, India}

\begin{document}

\maketitle
\begin{abstract}
Consider the action of $GL(n,\Q_p)$ on the $p$-adic unit sphere $\Sc_n$ arising from the linear action on $\Q_p^n\setminus\{0\}$. We show that for the action of a semigroup 
$\mathfrak{S}$ of $GL(n,\Q_p)$ 
on $\Sc_n$, the following are equivalent:  (1) $\mathfrak{S}$ acts distally on $\Sc_n$. (2) the closure of the image of $\mathfrak{S}$ in $PGL(n,\Q_p)$ is a compact group. 
On $\Sc_n$,  
we consider the `affine' maps $\T_a$ corresponding to $T$ in $GL(n,\Q_p)$ and a nonzero $a$ in $\Q_p^n$ satisfying $\|T^{-1}(a)\|_p<1$. We show that there exists a compact open 
subgroup $V$, 
which depends on $T$,  such that $\T_a$ is distal for every nonzero $a\in V$ if and only if $T$ acts distally on $\Sc_n$. The dynamics of `affine' maps on $p$-adic unit spheres is 
quite different from 
that on the real unit spheres. 

\noindent {\em 2010 Mathematics subject classification}: primary 37B05, 22E35 secondary 20M20, 20G25.

\noindent {\em Keywords and phrases}: distal actions, affine maps, $p$-adic unit spheres.

\end{abstract}


\section{Introduction}\label{s:1}
Let $X$ be a (Hausdorff) topological space. Let $\mathfrak{S}$ be a semigroup of homeomorphisms of $X$. The action of $\ms$ is said to be {\it distal} if for any pair of distinct 
elements $x, y\in X$, 
the closure of $\{(T(x),T(y))\mid T\in\mathfrak{S}\}$ does not intersect the diagonal $\{(d,d)\mid d\in X \}$; (equivalently we say that the $\ms$ acts {\it distally} on $X$). 
Let $T:{X}\to{X}$ be a homeomorphism. The map $T$ is said to be {\it distal} if the group 
$\{T^n\}_{n\in\Z}$ acts distally on $X$. If $X$ is compact, then $T$ is distal if and only if the semigroup $\{T^n\}_{n\in\N}$ acts distally (cf.\ Berglund et al.\ \cite{BJM}). 

The notion of distality was introduced by Hilbert (cf.\ Moore \cite{M9}) and studied by many in different contexts (see Ellis \cite{E4}, 
Furstenberg \cite{F6}, Raja-Shah \cite{RaSh10,RaSh11} and Shah \cite{Sh12}, and references cited therein). Note that a homeomorphism $T$ of a 
topological space is distal if and only if $T^n$ is so, for every $n\in\Z$.  

For the $p$-adic field $\Q_p$, let $|\cdot|_p$ denote the $p$-adic absolute value on $\Q_p$ and for $x=(x_1,\ldots, x_n)\in\Q_p^n$, $n\in\N$, let 
$\|x\|_p=\max_{1\leq i\leq n}|x_i|_p$. This defines a $p$-adic vector space norm on $\Q_p^n$. 
Let $\mathcal{S}_n=\{x\in\Q_p^n\mid \|x\|_p=1\}$ be the $p$-adic unit sphere (in $\Q_p^n$). We refer the reader to Koblitz \cite{K} for basic facts on $p$-adic vector spaces.  
We first define a canonical group action of $GL(n,\Q_p)$ on $\Sc_n$ as follows: For 
$T\in GL(n,\Q_p)$, let $\T:\Sc_n\to\Sc_n$ be defined as $\T(x)=\|T(x)\|_pT(x)$, $x\in\Sc_n$. This is a continuous group action. 
 We show that a semigroup $\ms$ of $GL(n,\Q_p)$ acts distally on $\Sc_n$ if and only if  
the closure of the image of $\ms$ in $PGL(n,\Q_p)=GL(n,\Q_p)/\D$ is a compact group; where $\D$ is the centre of $GL(n,\Q_p)$ (see Theorem \ref{cor-distal}). 
This is a $p$-adic analogue of Theorem~1 in Shah-Yadav \cite{SY} for the 
action of a semigroup in $GL(n+1,\R)$ on the real unit sphere $\mbb{S}^n$.   In particular, we show for 
$T\in SL(n,\Q_p)$ that $T$ is distal if and only if $\T$ is distal (more generally see Proposition \ref{act-Qp}, Corollary \ref{c} and the subsequent remark). 
For $T\in GL(n, \Q_p)$ and $a\in \Q_p^n\setminus\{0\}$, if $\|T^{-1}(a)\|_p<1$, then the corresponding `affine' map on $\mathcal{S}_n$,
$\overline{T}_a(x)=\|a+T(x)\|_p(a+T(x))$, $x\in \mathcal{S}_n$ is a homeomorphism. In case $T$, or more generally,  $\T$ is distal, then there exists a 
neigbourhood $V$ of 0 in $\Q_p^n$, such that for every nonzero $a$ in $V$, $\overline{T}_a$ is distal. If $\T$ is not distal, then every neighbourhood of 0 contains a 
nonzero $a$ such that $\T_a$ is not distal; see Theorem \ref{bar} and Corollary \ref{cbar}. In case of such `affine' actions on the real unit sphere $\mbb{S}^n$, there are many 
examples where $T\in GL(n+1,\R)$ such that
$T$ and/or $\T$ are distal but $\T_a$ is not distal, (see Theorem~7, Corollaries 10 \& 12 in \cite{SY}).   This illustrates that the dynamics of such `affine' actions on 
$\mathcal{S}_n$ is different from that on $\mbb{S}^n\subset\R^{n+1}$. 

For an invertible linear map $T$ on a $p$-adic vector space $V\approx\Q_p^n$, let $C(T)=\{v\in V\mid T^m(v)\to 0\mbox{ as }m\to\infty\}$ and 
$M(T)=\{v\in V\mid \{T^m(v)\}_{m\in\Z}\mbox{ is relatively compact}\}$. These are closed subspaces of $V$, $C(T)$ is known as the contraction space of $T$, and 
$V=C(T)\oplus  M(T)\oplus C(T^{-1})$.  It is easy to see that $T$ is distal (on $V$) if 
and only if $C(T)$ and $C(T^{-1})$ are trivial. We refer the reader to Wang \cite{W} for more details on the structure of $p$-adic contraction spaces. We will use the notion of 
contraction spaces below.


\section{Distality of the semigroup actions on $\mathcal{S}_n$}\label{s:2}

Let $\Q_p^n$ be an $n$-dimensional $p$-adic vector space equipped with the $p$-adic norm defined as above. 
For $T\in GL(n,\Q_p)$, let $\|T\|_p=\sup\{\|T(x)\|_p\mid x\in\Q_p,\, \|x\|_p=1\}$. Observe that the norm of an element or a matrix, defined this way, is of the from 
$p^m$ for some $m\in\Z$. The map $GL(n,\Q_p)\times \Q_p^n\to\Q_p^n$ given by $(T,x)\mapsto T(x)$, $T\in GL(n,\Q_p)$, $x\in \Q_p^n$, is continuous. 
We call $T\in GL(n,\Q_p)$ an {\it isometry} if it preserves the norm, i.e. if $T$ keeps the $p$-adic unit sphere $\Sc_n$ invariant. 
Note that $T$ is an isometry if and only if $\|T\|_p=1=\|T^{-1}\|_p$. For $x,y\in \Q_p^n$, 
$\|x+y\|_p\leq\max\{\|x\|_p,\|y\|_p\}$; the equality holds if $\|x\|_p\ne\|y\|_p$. We will use this fact extensively. 
We first consider the group action of $GL(n,\Q_p)$ on $\Sc_n$ defined in the introduction. For semigroups of $GL(n,\Q_p)$, we prove a result analogous to Theorem~1 in \cite{SY} 
(see Theorem~\ref{cor-distal}). 

Recall that $T\in GL(n,\Q_p)$ is said to be distal if $\{T^m\}_{m\in\Z}$ acts distally on $\Q_p^n$. 

The following useful lemma may be known. We will give a short proof for the sake of completion. 

\begin{lem} \label{distal-cpt} Let $T\in GL(n,\Q_p)$. The following are equivalent.
\begin{enumerate}
\item[$(1)$] $T$ is distal.
\item[$(2)$] The closure of the group generated by $T$ in $GL(n,\Q_p)$ is compact.
\item[$(3)$] $T^m$ is an isometry for some $m\in\N$.
\end{enumerate}
\end{lem}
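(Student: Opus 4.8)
The plan is to prove the three statements equivalent cyclically, as $(1)\Rightarrow(2)\Rightarrow(3)\Rightarrow(1)$. Two facts will be used throughout. First, the contraction-space characterisation recalled in the introduction: $T$ is distal on $\Q_p^n$ exactly when $C(T)=C(T^{-1})=\{0\}$, equivalently (via $\Q_p^n=C(T)\oplus M(T)\oplus C(T^{-1})$) exactly when $M(T)=\Q_p^n$, i.e.\ when every two-sided orbit $\{T^m v\}_{m\in\Z}$ is relatively compact. Second, the identification of isometries: $A\in GL(n,\Q_p)$ is an isometry iff $\|A\|_p=\|A^{-1}\|_p=1$, which holds iff both $A$ and $A^{-1}$ have entries in $\Z_p$; hence the isometries are precisely $GL(n,\Z_p)$, a subgroup that is both compact and open in $GL(n,\Q_p)$.

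For $(1)\Rightarrow(2)$, I would use distality to get $M(T)=\Q_p^n$, so each standard basis vector $e_j$ has a relatively compact, hence bounded, orbit $\{T^m e_j\}_{m\in\Z}$. Since the operator norm for the max-norm satisfies $\|A\|_p=\max_j\|Ae_j\|_p$, this produces a constant $C$ with $\|T^m\|_p\le C$ for all $m\in\Z$. Thus $\overline{\langle T\rangle}$ lies in the compact norm-ball $\{A\in M_n(\Q_p):\|A\|_p\le C\}$, and since $\|T^{-m}\|_p\le C$ as well, any subsequential limit of the $T^{m_k}$ is invertible (its inverse being a limit of the corresponding $T^{-m_k}$); so the closure stays inside $GL(n,\Q_p)$ and is compact.

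The heart of the argument is $(2)\Rightarrow(3)$. Writing $K=\overline{\langle T\rangle}$, compact by hypothesis, set $H=K\cap GL(n,\Z_p)$. As $GL(n,\Z_p)$ is open, $H$ is an open subgroup of the compact group $K$; its cosets form an open cover of $K$, so $H$ has finite index in $K$. Therefore the powers $\{T^m\}_{m\in\Z}\subseteq K$ meet only finitely many cosets of $H$, and by pigeonhole there are $m_1>m_2$ with $T^{m_1}H=T^{m_2}H$, whence $T^{m_1-m_2}\in H\subseteq GL(n,\Z_p)$; so $T^m$ is an isometry for $m=m_1-m_2\in\N$. For $(3)\Rightarrow(1)$ I would argue directly: given that $T^m$ is an isometry and $w\ne 0$, write $k\in\Z$ as $k=mq+r$ with $0\le r<m$; then $\|(T^m)^q w\|_p=\|w\|_p$ together with $\|T^r v\|_p\ge\|v\|_p/\|T^{-r}\|_p$ yields $\|T^k w\|_p\ge c\,\|w\|_p$ with $c=\min_{0\le r<m}\|T^{-r}\|_p^{-1}>0$, so the orbit of $w$ is bounded away from $0$ and $T$ is distal.

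The step demanding the most care is $(2)\Rightarrow(3)$: a compact subgroup of $GL(n,\Q_p)$ need only preserve \emph{some} lattice, so one cannot expect $T$, or even an arbitrary power of $T$, to lie in $GL(n,\Z_p)$. What rescues the argument is the openness of $GL(n,\Z_p)$, which forces the stabiliser of the standard lattice inside $K$ to have finite index and thereby captures a genuine power of $T$. The remaining verifications — the operator-norm identity for the max-norm, the compactness of norm-balls in $M_n(\Q_p)$, and the coset bookkeeping — are routine.
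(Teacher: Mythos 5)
Your proof is correct, and while it starts from the same place as the paper it diverges in the key steps. For $(1)\Rightarrow(2)$ both arguments pass through the decomposition $\Q_p^n=C(T)\oplus M(T)\oplus C(T^{-1})$: distality forces $C(T)=C(T^{-1})=\{0\}$, hence $M(T)=\Q_p^n$ and all orbits are bounded. The paper then outsources the conclusion to Wang's Proposition 1.3 to get boundedness of $\{\|T^m\|_p\}_{m\in\Z}$ and relative compactness in $GL(n,\Q_p)$, whereas you verify both by hand via $\|A\|_p=\max_j\|Ae_j\|_p$ and the observation that subsequential limits of $T^{m_k}$ are invertible because the $T^{-m_k}$ also converge along a subsequence --- same idea, slightly more self-contained. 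The genuine divergence is in $(2)\Rightarrow(3)$: the paper uses recurrence in the compact closure ($T^{\pm m_k}\to\Id$ for some $m_k\in\N$) together with the discreteness of the value set $\{p^l\mid l\in\Z\}$ of the norm to force $\|T^{\pm m_k}\|_p=1$ for large $k$; you instead identify the isometry group as $GL(n,\Z_p)$, note it is open, so that $H=\overline{\langle T\rangle}\cap GL(n,\Z_p)$ has finite index in the compact closure, and pigeonhole the powers of $T$ into cosets to land a positive power inside $H$. Both exploit the totally disconnected structure of $\Q_p$, but your coset-counting argument avoids the recurrence fact and would work verbatim with any compact open subgroup in place of $GL(n,\Z_p)$. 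Finally, you close the cycle with a direct, quantitative proof of $(3)\Rightarrow(1)$ via the uniform bound $\|T^kw\|_p\ge c\,\|w\|_p$ with $c=\min_{0\le r<m}\|T^{-r}\|_p^{-1}$, whereas the paper gets this implication for free from $(3)\Rightarrow(2)$ (``obvious'') and $(2)\Rightarrow(1)$ (``compact groups act distally''). The net trade-off: the paper's hub-and-spoke proof through $(2)$ is shorter but leans on cited results and soft facts; your cyclic proof is longer but elementary throughout, needing only the decomposition of $\Q_p^n$ recalled in the introduction.
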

\begin{proof}
$(3)\Rightarrow(2)$ is obvious and $(2)\Rightarrow (1)$  follows as compact groups act distally. Now suppose $T$ is distal, i.e.\ $\{T^m\}_{m\in\Z}$ acts distally on $\Q_p^n$. 
Then the contraction spaces $C(T)$ and $C(T^{-1})$ are trivial. By Lemma 3.4 of \cite{W}, we get that
$\Q_p^n=M(T)=\{x\in\Q_p^n\mid\{T^m(x)\}_{m\in\Z}\mbox{ is relatively compact}\}$, (cf.\ \cite{W}). By Proposition 1.3 of \cite{W}, $\cup_{m\in\Z}T^m(\Sc_n)$ is 
relatively compact, i.e.\ 
$\{\|T^m\|_p\}_{m\in\Z}$ is bounded and hence $\{T^m\mid m\in\Z\}$ is 
relatively compact in $GL(n,\Q_p)$. This proves $(1)\Rightarrow (2)$. Now suppose $T$ is contained in a compact group. Then $T^{\pm m_k}\to \Id$, 
for some $\{m_k\}\subset\N$. Therefore, $\|T^{\pm m_k}\|_p\to 1$ and as $\{\|T^m\|_p\mid m\in\Z\}\subset \{p^l\mid l\in\Z\}$, we get that for all large $k$, 
$\|T^{\pm m_k}\|_p=1$ and $T^{m_k}$ is an isometry. Therefore, $(2)\Rightarrow (3)$.
\end{proof}

We say that a topological group $G$ acts continuously on a topological space $X$ by homeomorphisms if there is a homomorphism 
$\psi: G \to\Homeo(X)$ such that the corresponding map $G\times X\to X$ given by $(g,x)\mapsto \psi(g)(x)$, $g\in\ G$, $x\in X$, is continuous. We say that a semigroup $H$ of $G$
(resp.\ $T\in G$) acts distally on $X$ if $\psi(H)$ acts distally on $X$ (resp.\ $\psi(T)$ is distal). We state a useful lemma which is well-known and can be proven easily.

\begin{lem} \label{lem-cpt} Let $X$ be a Hausdorff topological space and let $G$ be a Hausdorff topological group which acts continuously on $X$ by 
homeomorphisms. Let $H$ be a semigroup and $K$ be a compact subgroup of $G$ such that all the 
elements of $H$ normalise $K$. Then the semigroup $HK$ acts distally on $X$ if and only if $H$ acts distally on $X$. In particular, 
if $T,S\in G$ are such that $TS=ST$ and $S$ generates a relatively compact group in $G$, then $T$ acts distally on $X$ if and only if $TS$ acts distally on $X$. 
\end{lem}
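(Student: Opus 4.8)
The plan is to prove the two directions of the main equivalence separately, and then deduce the ``in particular'' clause by applying the equivalence to two different semigroups. Throughout I work with nets, since $X$ is only assumed Hausdorff and the closures in the definition of distality are best described that way.

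First the easy direction. Since $K$ is a subgroup it contains the identity $e$, so $H=He\subseteq HK$. Distality is inherited by sub-semigroups: for any distinct $x,y\in X$ the set $\{(T(x),T(y)):T\in H\}$ is contained in $\{(T(x),T(y)):T\in HK\}$, hence so is its closure. Thus if $HK$ acts distally then so does $H$.

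For the converse, which is the heart of the argument, I would argue by contradiction. Suppose $H$ acts distally but $HK$ does not. Then there are distinct $x,y\in X$, a point $z\in X$, and a net $(T_\alpha k_\alpha)$ in $HK$ with $T_\alpha\in H$ and $k_\alpha\in K$, such that $T_\alpha k_\alpha(x)\to z$ and $T_\alpha k_\alpha(y)\to z$. The naive move --- replacing $k_\alpha$ by a subnet converging in the compact group $K$ --- does not by itself help, because $T_\alpha$ need not converge and we have no equicontinuity to control $T_\alpha$ on a moving argument $k_\alpha(x)$. The device that unlocks the proof is the normalisation hypothesis: since each $T_\alpha$ normalises $K$, one may write
\[ T_\alpha k_\alpha = k'_\alpha T_\alpha,\qquad k'_\alpha := T_\alpha k_\alpha T_\alpha^{-1}\in K, \]
moving the compact factor to the outside. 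Now compactness is usable: passing to a subnet, $k'_\alpha\to k''\in K$, hence $(k'_\alpha)^{-1}\to (k'')^{-1}$ by continuity of inversion. Applying $(k'_\alpha)^{-1}$ to $k'_\alpha T_\alpha(x)=T_\alpha k_\alpha(x)\to z$ and using the joint continuity of the restricted action $K\times X\to X$, I obtain $T_\alpha(x)=(k'_\alpha)^{-1}\bigl(T_\alpha k_\alpha(x)\bigr)\to (k'')^{-1}(z)$, and likewise $T_\alpha(y)\to (k'')^{-1}(z)$. Writing $w=(k'')^{-1}(z)$, this exhibits $(w,w)$ in the closure of $\{(T(x),T(y)):T\in H\}$ with $x\neq y$, contradicting the distality of $H$. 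This contradiction is the crux, and the only genuinely nontrivial point is recognising that normalisation is exactly what allows the inverse of the convergent compact factor to act continuously.

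Finally, for the ``in particular'' statement, set $K=\overline{\{S^m:m\in\Z\}}$, a compact subgroup by hypothesis. Since $T$ commutes with $S$ it commutes with every $S^m$, and as the set $\{g\in G:TgT^{-1}=g\}$ is closed in the Hausdorff group $G$, it contains $K$; thus $T$ centralises $K$, so $H:=\{T^m:m\in\Z\}$ normalises $K$. Similarly $TS$ commutes with $S$, so $H':=\{(TS)^m:m\in\Z\}$ normalises $K$. Because $T$ and $S$ commute, $(TS)^m=T^mS^m$ with $S^m\in K$, which gives $HK=H'K$. Applying the equivalence just proved twice then yields: $T$ distal $\iff H$ distal $\iff HK$ distal $=H'K$ distal $\iff H'$ distal $\iff TS$ distal, completing the proof.
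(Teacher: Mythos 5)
Your proof is correct. The paper offers no proof of this lemma at all (it is stated as ``well-known and can be proven easily''), so there is nothing to compare it against; your argument --- using the normalisation hypothesis to rewrite $T_\alpha k_\alpha=(T_\alpha k_\alpha T_\alpha^{-1})T_\alpha$ with $T_\alpha k_\alpha T_\alpha^{-1}\in K$, extracting a convergent subnet in the compact group $K$, and then invoking joint continuity of the action to recover convergence of $T_\alpha(x)$ and $T_\alpha(y)$ to a common limit --- is a complete and correct rendition of the standard argument the authors presumably had in mind, and your deduction of the ``in particular'' clause by applying the equivalence twice through the identity $HK=H'K$ (with $H=\{T^m\}_{m\in\Z}$, $H'=\{(TS)^m\}_{m\in\Z}$, $K=\overline{\{S^m\mid m\in\Z\}}$) is likewise sound.
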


We now recall the natural action of $GL(n,\Q_p)$ on $\Sc_n$ defined earlier: For $T\in GL(n,\Q_p)$ and $x\in \Sc_n$, $\T(x)=\|T(x)\|_pT(x)$. Here, $\T$ defines a 
homeomorphism of $\Sc_n$ and it is trivial if and only if $T=p^n\Id$ for some $n\in\Z$.
The map $GL(n,\Q_p)\to\Homeo(\Sc_n)$, given by $T\mapsto\T$ as above, is a homomorphism which factors through the discrete 
central subgroup $\{p^n\Id\mid n\in\Z\}$ of $GL(n,\Q_p)$. The corresponding map $GL(n,\Q_p)\times \Sc_n\to \Sc_n$, given by $(T,x)\mapsto \T(x)$, is continuous.  Therefore, 
$GL(n,\Q_p)$ acts continuously on $\Sc_n$ by homeomorphisms as above. Observe that $\mathcal{S}_1=\Z_p^*=\{x\in\Q_p\mid |x|_p=1\}$ and 
$GL(1,\Q_p)=\Q_p\setminus\{0\}$ acts distally 
on $\Sc_1$ as $\T=\|T\|_pT\in \Sc_1$ for every $T\in GL(1,\Q_p)$. The following will be useful in proving the main result of this section. 

\begin{prop} \label{act-Qp}
Let $T\in GL(n,\Q_p)$. If $bT$ is distal for some $b\in\Q_p$, then $\T$ is distal. Conversely, if $\T$ is distal, then for some $m\in\N$ and $l\in\Z$, $p^lT^m$ is distal. 
If $|\det T|_p=1$ and $\T$ is distal, then $T$ is distal.\end{prop}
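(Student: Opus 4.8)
\emph{Overview.} The three assertions will be handled in turn; the first is a compactness transport argument, while the converse (second statement) is the substantial one, and the third will fall out of the same structural fact. The unifying observation, which I will isolate as a claim, is that distality of $\T$ forces all eigenvalues of $T$ (over $\overline{\Q_p}$, with the canonical extension of $|\cdot|_p$) to share a single absolute value.

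\emph{The easy direction.} The plan is to move compactness from $GL(n,\Q_p)$ into $\Homeo(\Sc_n)$. Writing $b=up^k$ with $u\in\Z_p^*$ and $k\in\Z$, a short computation with the scalar normalisation defining $\T$ gives $\overline{bT}=R_u\circ\T$, where $R_u(x)=ux$ is multiplication by the unit $u$ on $\Sc_n$; this $R_u$ commutes with $\T$, and $\{R_u^j\}_j\subset\Z_p^*$ generates a relatively compact group of homeomorphisms. If $bT$ is distal, then by Lemma \ref{distal-cpt} it lies in a compact subgroup of $GL(n,\Q_p)$, which acts distally on $\Sc_n$ by continuity of the action; hence $\overline{bT}$ is distal. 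Applying the ``in particular'' part of Lemma \ref{lem-cpt} to the commuting pair $\T$, $R_u$ then shows $\T$ is distal.

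\emph{The key claim and its proof (the main obstacle).} I argue the contrapositive: unequal eigenvalue absolute values force $\T$ non-distal. Since $T\mapsto\T$ is a homomorphism and factors through $\{p^n\Id\}$, we have $\overline{p^lT^m}=\overline{T^m}=\T^m$ for all $l\in\Z$, $m\in\N$; as $\T$ is distal iff $\T^m$ is distal, it suffices to make one $\overline{p^lT^m}$ non-distal. Choosing $m$ to clear denominators, the eigenvalue absolute values of $T^m$ become integral powers of $p$; if these are not all equal, pick $l$ so that $R'=p^lT^m$ has spectral radius $>1$ while every remaining eigenvalue absolute value is $\le 1$. By the contraction-space theory (Wang \cite{W}) this yields a $T$-invariant $\Q_p$-rational splitting $\Q_p^n=V_{\max}\oplus V_{\mathrm{rest}}$, with $V_{\max}=C(R'^{-1})$ nonzero and proper, on which $\|R'^kv\|_p$ grows at the strictly largest exponential rate, while on $V_{\mathrm{rest}}=C(R')\oplus M(R')$ it grows strictly slower. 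Now take $x=x_{\max}$ of norm $1$ in $V_{\max}$ and $y=x_{\max}+v$ with $0\ne v\in V_{\mathrm{rest}}$ of norm $\le 1$, so $x,y\in\Sc_n$ and $x\ne y$. The crux is that after normalisation the $V_{\mathrm{rest}}$-part is negligible: since $\|R'^kx_{\max}\|_p$ dominates $\|R'^kv\|_p$ for large $k$, the ultrametric equality of norms gives $\overline{R'^k}(x)=\overline{R'^k}(x_{\max})+o(1)$ and likewise for $y$, so $\overline{R'^k}(x)-\overline{R'^k}(y)\to 0$; by compactness of $\Sc_n$ the orbit closure of $(x,y)$ meets the diagonal and $\overline{R'}=\overline{p^lT^m}$ is not distal. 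I expect the delicate point to be the uniform comparison of the growth of $\|R'^kv\|_p$ across $V_{\max}$ and $V_{\mathrm{rest}}$, which is precisely where the strict spectral gap secured by passing to the power $T^m$ (integral exponents) is needed.

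\emph{Deducing Parts 2 and 3.} Granting the claim, if $\T$ is distal then all eigenvalues of $T$ have a common absolute value $p^{a/b}$; taking $m=b$ and $l=a$ makes every eigenvalue of $p^lT^m$ have absolute value $1$, so $C(p^lT^m)$ and $C((p^lT^m)^{-1})$ are trivial and $p^lT^m$ is distal, which is Part 2. For Part 3, the common absolute value $p^{q}$ satisfies $|\det T|_p=p^{nq}$; the hypothesis $|\det T|_p=1$ forces $q=0$, hence every eigenvalue of $T$ has absolute value $1$, so $C(T)=C(T^{-1})=\{0\}$ and $T$ is itself distal.
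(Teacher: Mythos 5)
Your proposal is correct, and for the substantial (converse) direction it takes a genuinely different route from the paper's. The paper invokes Wang's commuting decomposition (3.3 of \cite{W}), writing $T^m=AUC$ with $A$ semisimple generating a relatively compact group, $U$ unipotent and $C$ diagonal; it strips the compact factors via Lemma \ref{lem-cpt} and is then left with the easy statement that a diagonal $D$ with $p$-power entries such that $\ol{D}$ is distal must equal $p^l\Id$, proved by the perturbation $D_1^i(x+y)=x+p^{ki}y\to x$ along a contracting eigenvector. You instead isolate the spectral criterion that distality of $\T$ forces all eigenvalues of $T$ over $\ol{\Q}_p$ to share a single absolute value, and prove it using the splitting $\Q_p^n=C(R'^{-1})\oplus M(R')\oplus C(R')$ for $R'=p^lT^m$, running the proximal-pair construction directly there with no prior reduction to the diagonal case; the price is that you must control the normalising factors, i.e.\ show that $\|R'^kx_{\max}\|_p\to\infty$ while $\|R'^kv\|_p$ stays bounded. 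That step --- the one you flag as delicate --- is routine: orbits in $C(R')\oplus M(R')$ are bounded by definition of $C$ and $M$, and pointwise contraction of $(R'|_{V_{\max}})^{-1}$ on the finite-dimensional space $V_{\max}$ gives operator-norm contraction, hence $\|R'^kx_{\max}\|_p\geq\|x_{\max}\|_p/\|(R'|_{V_{\max}})^{-k}\|_p\to\infty$. The two arguments share the same dynamical core, and the paper's conclusion $D=p^l\Id$ is exactly your equal-absolute-value condition in disguise; what your version buys is that it bypasses the decomposition 3.3 of \cite{W} and Lemma \ref{lem-cpt} in this direction, relying only on the contraction-space facts quoted in the paper's introduction, and it yields cleaner deductions of the last two assertions --- in particular part 3 via $|\det T|_p=\prod_i|\lambda_i|_p=p^{nq}$, where the paper argues through $T^m=p^lS$ with $|\det S|_p=1$. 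One small repair: take $\|v\|_p<1$ rather than $\|v\|_p\leq 1$, since when $\|v\|_p=1$ the ultrametric inequality does not guarantee $y=x_{\max}+v\in\Sc_n$; scaling $v$ by a power of $p$ costs nothing.
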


\begin{proof}
Observe that $bT$ is distal if and only if $|b|_p^{-1}T$ is so. As $\T=\ol{p^mT}$ for any $m\in\Z$, we may replace $T$ by $|b|_p^{-1}T$ and assume that $T$ is distal. By 
Lemma \ref{distal-cpt}, $T$ generates a relatively compact group, and hence $\T$ is distal. Conversely, suppose $\T$ is distal. By 3.3 of \cite{W}, there exists $m\in\N$ such that 
$T^m=AUC$, where $C$ is a diagonal matrix, $U$ is unipotent, $A$ is semisimple, $A$, $U$ and $C$ commute with each other and $A$ as well as $U$ generate a relatively 
compact group. Now by Lemma \ref{lem-cpt}, we have that $\overline{C}$ is distal. Here, $C=D D'=D'D$ for some diagonal matrices $D$ and $D'$ such that the diagonal entries 
of $D$ (resp.\ $D'$) 
are of the form $p^{l_k}$, $l_k\in\Z$, $k=1,\ldots n$ (resp.\ in $\Z_p^*$). Since $D'$ also generates a relatively compact group and it commutes with $D$, by 
Lemma \ref{lem-cpt}, $\overline{D}$ is distal.
It is enough to show that $D=p^l\Id$, as in this case, $D$ would be central in $GL(n,\Q_p)$ and 
this would imply that $AU$ and $D'$ commute, and hence, $p^{-l}T^m=AUD'$ would generate a relatively compact group which in turn would imply that it is distal. If possible, 
suppose $p^l$ and $p^{l_1}$ are two entries in $D$ such that 
$l<l_1$. As $\ol{D}=\ol{p^{-l}D}$, we get for $D_1=p^{-l}D$ that $\ol{D}_1=\ol{D}$ is distal, $1$ is an eigenvalue of $D_1$ and $D_1$ has another eigenvalue 
$p^{l_1-l}$ which has $p$-adic absolute value less than 1. Then the contraction space of $D_1$, $C(D_1)\ne\{0\}$ as we can take a nonzero $y\in\Q_p^n$ satisfying $D_1(y)=p^ky$ 
for $k=l_1-l\in\N$; and it follows that $y\in C(D_1)$. 
Let $x\in\Sc_n$ be such that $D_1(x)=x$ and let $y$ be as above such that $0<\|y\|_p<1$. Then $\ol{D}_1(x)=x$ and $x+y\in \Sc_n$. Now $D_1^i(x+y)=(x+p^{ki}y)\to x\in \Sc_n$ as 
$i\to\infty$. Therefore, $\ol{D}_1^i(x+y)\to x$ and it leads to a contradiction as $\ol{D}_1$ is distal.  Therefore, $D=p^l\Id$ and $p^{-l}T^m$ is distal. 

Suppose $|\det T|_p=1$. Then $|\det(T^m)|_p=|\det T|_p^m=1$. As $\T$ is distal, $T^m=p^l S$ for some $l\in\Z$, where $S$ generates a relatively 
compact group. Therefore, $|\det S|_p=1$, and hence $l=0$ and $T^m=S$. This implies that $T$ also generates a relatively compact group, and by Lemma \ref{distal-cpt}, it is distal.
\end{proof}

From now on, ${\D}=\{b \, \Id\mid b\in{\Q_p}\}$, the centre of $GL(n,\Q_p)$.  The following characterises distal actions of semigroups on $\Sc_n$. 
Recall that $PGL(n,\Q_p)=GL(n,{\Q_p})/{\D}$. 

\begin{thm} \label{cor-distal}
Let $\mathfrak{S}\subset GL(n,\Q_p)$ be a semigroup. Then the following are equivalent:
\begin{enumerate}
\item[$(i)$] $\mathfrak{S}$ acts distally on $\Sc_n$.
\item[$(ii)$] The group generated by $\ms$ acts distally on $\Sc_n$. 
\item[$(iii)$] The closure of $(\mathfrak{S}\D)/\D$ in $PGL(n,\Q_p)$ is a compact group. 
\end{enumerate}
\end{thm}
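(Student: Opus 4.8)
The plan is to prove the cycle $(iii)\Rightarrow(ii)\Rightarrow(i)\Rightarrow(iii)$. The implication $(ii)\Rightarrow(i)$ is immediate: for any pair $x\ne y$, the $\ms$-orbit of $(x,y)$ is contained in the orbit under the group generated by $\ms$, so distality of the latter forces distality of the former.

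For $(iii)\Rightarrow(ii)$ I would argue by passing to a preimage. Write $\pi\colon GL(n,\Q_p)\to PGL(n,\Q_p)$ for the quotient map and let $K=\overline{(\ms\D)/\D}$, a compact group by hypothesis; then $G_1=\pi^{-1}(K)$ is a closed subgroup containing $\ms$, hence also the group generated by $\ms$. The central subgroup $Z_0=\{p^k\Id\mid k\in\Z\}$ acts trivially on $\Sc_n$ (the action was noted to factor through $Z_0$), so the $G_1$-action on $\Sc_n$ factors through $G_2=G_1/Z_0$. Since $G_1/\D\cong K$ is compact and $\D/Z_0\cong\Z_p^*$ is compact, $G_2$ is an extension of a compact group by a compact group and is therefore compact. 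As compact groups act distally, the image of $G_2$ in $\Homeo(\Sc_n)$ acts distally, and the group generated by $\ms$ maps into this image; this yields $(ii)$.

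The substantive implication is $(i)\Rightarrow(iii)$, which I would split into showing first that $\mathcal H:=\overline{(\ms\D)/\D}$ is compact, and then that this compact subsemigroup of $PGL(n,\Q_p)$ is a group. For compactness, suppose $\pi(\ms)$ is not relatively compact, so there is a sequence $T_j\in\ms$ whose images leave every compact subset of $PGL(n,\Q_p)$. Using the Cartan decomposition $T_j=k_j a_j k_j'$ with $k_j,k_j'\in GL(n,\Z_p)$ and $a_j=\mathrm{diag}(p^{a^{(j)}_1},\dots,p^{a^{(j)}_n})$, escaping to infinity in $PGL(n,\Q_p)$ forces the spread $\max_i a^{(j)}_i-\min_i a^{(j)}_i$ to be unbounded, so along a subnet it tends to $\infty$; passing further I may assume $k_j\to k$ and $k_j'\to k'$ in the compact group $GL(n,\Z_p)$, a fixed ordering $a^{(j)}_1\ge\cdots\ge a^{(j)}_n$, and a fixed top-norm block $B=\{i: a^{(j)}_i-a^{(j)}_n \text{ bounded}\}$ with $n\in B$ and $B\ne\{1,\dots,n\}$. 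Choosing an index $i_0\notin B$ and setting $x=(k')^{-1}(e_n)$, $x'=(k')^{-1}(e_n+p\,e_{i_0})$ (both in $\Sc_n$ and distinct), the map $\overline{a_j}$ contracts the $e_{i_0}$-direction relative to the top-norm block, so together with $k_j'\to k'$ and $k_j\to k$ I expect $\overline{T_j}(x)$ and $\overline{T_j}(x')$ to converge, along a subnet, to the same point of $\Sc_n$. This exhibits a proximal pair and contradicts $(i)$; hence $\mathcal H$ is compact. Finally, $\mathcal H$ is a closed subsemigroup of the group $PGL(n,\Q_p)$, hence cancellative: for $h\in\mathcal H$ the closure of $\{h^m\mid m\in\N\}$ is a compact semigroup, so it contains an idempotent, which in a group must be $\Id$; a net $h^{m_\alpha}\to\Id$ then gives $h^{-1}=\lim h^{m_\alpha-1}\in\mathcal H$, so $\mathcal H$ is closed under inverses and is a compact group, giving $(iii)$.

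I expect the main obstacle to be the compactness step of $(i)\Rightarrow(iii)$: converting non-relative-compactness of $\pi(\ms)$ in $PGL(n,\Q_p)$ into an explicit proximal pair on $\Sc_n$. The delicate part is the bookkeeping with the Cartan decomposition — choosing the two test points so that their images remain on the sphere, agree to leading order in the top-norm block, and have their difference contracted after the normalisation defining $\overline{T}_j$. Here the ultrametric estimate $\|x+y\|_p\le\max\{\|x\|_p,\|y\|_p\}$ (with equality for unequal norms) and the convergence of the compact factors $k_j,k_j'$ must be combined carefully to guarantee a common limit.
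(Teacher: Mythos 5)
Your proposal is correct, and for the substantive implication $(i)\Rightarrow(iii)$ it takes a genuinely different route from the paper; the other two implications coincide in substance with the paper's (its proof of $(iii)\Rightarrow(ii)$ also rests on the splitting $\D=\{p^k\Id\mid k\in\Z\}\times\{z\,\Id\mid z\in\Z_p^*\}$ and the fact that the action factors through the discrete factor — your passage to the full preimage $\pi^{-1}(K)$ is just a cleaner packaging of the same compact-by-compact argument). For $(i)\Rightarrow(iii)$ the paper never touches the Cartan decomposition: it first shows, via Proposition \ref{act-Qp} and Lemma \ref{distal-cpt}, that each element of $\ms$ is, up to a power and a central scalar, contained in a compact group; it then invokes Lemma 3.3 of Guivarc'h--Raja \cite{GR} to embed $\pi(\ms)$ in a compact extension of a unipotent group, extracts a Kolchin flag for the unipotent part, and produces the proximal pair from a sequence $T_i=K_iD_iU_i$ with unbounded unipotent components $U_i$; the group-ness of the compact closure comes for free from an earlier reduction making $\ms$ a closed group containing $\D$. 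You instead obtain the proximal pair directly from the decomposition $T_j=k_ja_jk_j'$ over $\Z_p$ plus the ultrametric, and recover group-ness by the idempotent argument. Your route is more elementary and self-contained (no algebraic-group structure theory, no external structure lemma), while the paper's route yields structural by-products (elementwise relative compactness modulo $\D$, an $\ms$-invariant unipotent group and flag) in the spirit of the techniques used elsewhere in the paper. Finally, the step you flag as the main obstacle does close, and precisely because of your ordering $a^{(j)}_1\geq\cdots\geq a^{(j)}_n$: writing $k_j'(x)=e_n+\epsilon_j$ with $\|\epsilon_j\|_p\to 0$, every exponent is at least $a^{(j)}_n$, so after the normalisation defining $\ol{a_j}$ the error term keeps norm at most $\|\epsilon_j\|_p$, while the extra component $p\,e_{i_0}$ of $k_j'(x')$ acquires the factor $p^{-1-(a^{(j)}_{i_0}-a^{(j)}_n)}\to 0$; hence $\ol{a_j}(k_j'(x))$ and $\ol{a_j}(k_j'(x'))$ both tend to $e_n$, and applying the isometries $k_j\to k$ gives $\ol{T_j}(x),\,\ol{T_j}(x')\to k(e_n)$, the desired common limit.
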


\begin{proof} Suppose (i) holds. First suppose $\ms\subset SL(n,\Q_p)$. As the closure $\ol{\ms}$ of $\ms$ is a semigroup 
in $SL(n,\Q_p)$ and it also acts distally on $\Sc_n$, we may assume that $\ms$ is closed. By Proposition \ref{act-Qp} and Lemma \ref{distal-cpt}, each element in 
$\ms$ generates a relatively compact group (in $\ms$). In particular, each element of $\ms$  has an inverse in $\ms$ and $\ms$ is a group. Now by Lemma 3.3 of \cite{GR}, 
$\ms$ is contained in a compact extension of a unipotent subgroup in $GL(n,\Q_p)$ which is normalised by $\ms$. Now suppose 
$\ms\not\subset SL(n,\Q_p)$. We will first show that $\ms$ is contained in a compact extension of a nilpotent group in $GL(n,\Q_p)$ and the latter is isomorphic to a direct 
product of $\D$ and a unipotent subgroup normalised by elements of $\ms$.

 Let ${\mathcal C}=\{p^n\Id\mid n\in\Z\}$ and let ${\mathcal Z}=\{z\, \Id\mid z\in\Z_p^*\}$. Then ${\mathcal C}$ and 
${\mathcal Z}$ are closed subgroups of $\D$, ${\mathcal C}$ is discrete, ${\mathcal Z}$ is compact and $\D={\mathcal C}\times{\mathcal Z}$. As the actions of both $\ms$ and 
$\ms {\mathcal C}$ on $\Sc_n$ are 
same and the latter is also a semigroup, without loss of any generality, we may replace $\ms$ by $\ms {\mathcal C}$ and assume that
${\mathcal C}\subset\ms$. As noted earlier, we may also assume that $\ms$ is closed. Moreover, as ${\mathcal Z}$ is compact and central, $\ms{\mathcal Z}$ is a closed
semigroup and by Lemma \ref{lem-cpt}, $\ms{\mathcal Z}$ acts 
distally on ${\Sc_n}$. Therefore, we may replace $\ms$ by $\ms{\mathcal Z}$ and assume that ${\mathcal Z}\subset \ms$. Now we have $\D\subset\ms$ and $\ms\D=\ms$.  
Let $T\in \ms$. By Proposition \ref{act-Qp}, $T^m=p^lS$ for some $l\in\Z$ and $S\in GL(n,\Q_p)$ such that  
$S$ generates a relatively compact group. Moreover, as $\D\subset\ms$, we have that $S\in\ms$. Therefore, the closure of the semigroup generated by $S$ in $\ms$ is compact and 
hence a group. In particular, $S$ is invertible in $\ms$ and we have that $T^m$ is invertible in $\ms$,
 and hence $T^{m-1}(T^m)^{-1}$ is the inverse of $T$ in $\ms$. Therefore, we may assume that $\ms$ is a closed  group. Let $\mT$ be a maximal torus in $GL(n,\Q_p)$. 
 Let $\mT_a$ (resp.\ $\mT_d$) be the anisotropic (resp.\ split) torus in $\mT$. Then $\mT_a$ is compact and $\mT_a\mT_d$ is an almost direct product. Moreover, 
 since all maximal tori are conjugate to each other, there 
 exists $m\in\N$, such that for any element $T\in\ms\subset GL(n,\Q_p)$, we have $T^m=\tau_a \tau_d \tau_u$, where $\tau_u$ is unipotent, 
 $\tau_s=\tau_a\tau_d\in \mT$ is semisimple, $\tau_a\in \mT_a$ which is a compact group, $\tau_d\in \mT_d$ and $\tau_a$, $\tau_d$ and $\tau_u$ commute with each other. 
 Note that $\mT$ depends on $T$, but $m$ is independent of the choice of $T$. We know that $\tau_u$, being unipotent, generates a relatively compact group. As $\T$ is distal, 
 arguing as above in the 
 proof of Proposition \ref{act-Qp}, we get that $\tau_d=(t_{ij})_{n\times n}$ is such that $t_{ij}=0$ if $i\ne j$ and $|t_{ii}|_p$ is the same for all $i$. We have $T^m=CS$, where $C\in \D$ 
 and $S$ generates a relatively 
 compact group. Let $\pi: GL(n,\Q_p)\to GL(n,\Q_p)/\D$ be the natural projection. Note that $GL(n,\Q_p)/\D$ is an algebraic group and it is linear, i.e.\ it can be realised as a 
 subgroup of 
 $GL(V)$ for some $p$-adic vector space $V$. Now $\pi(\ms)$ is a (closed) subgroup of $GL(n,\Q_p)/\D$ and every element of $\pi(\ms)$ generates a relatively compact 
 group, by Lemma 3.3 of \cite{GR}, 
 $\pi(\ms)$ is contained in a compact extension of a unipotent group, i.e.\ $\pi(\ms)\subset K\ltimes\, \U$, a semi-direct product, where $K, \, \U\subset GL(V)$, $K$ is a 
 compact group and $\U$ is unipotent. 
 We can choose $K$ such that $\ol{\pi(\ms)\, \U}/\U$ is isomorphic to $K$. Let $H=K\ltimes \U=\ol{\pi(\ms)\, \U}$. 
 Let $\G$ be the smallest algebraic subgroup of $GL(n,\Q_p)/\D$ containing $\pi(\ms)$. Here, since $\U$ is unipotent and it is normalised by $\pi(\ms)$, it is also 
 normalised by $\G$.  Then $\G\, \U$ is an algebraic group, and hence it is closed. As $H\subset \G\, \U$ and 
  $H=(\G\cap H)\,\U$, it follows that $K=H/\U$ is isomorphic to $(\G\cap H)/(\G\cap \U)$. Let $H_0=\G\cap H$ and let $\U_0=\G\cap \U$. Then 
  $\pi(\ms)\subset H_0$ and $\pi(\ms)$ normalises $\U_0$. Here, $\pi^{-1}(\U_0)=\D\times\U'$ where $\U'$ is the unipotent radical of 
  $\pi^{-1}(\U_0)$, $\U'$ is isomorphic to $\pi(\U')=\U_0$ and it is normalised by $\ms$.
 Therefore $\ms\subset H'=\pi^{-1}(H_0)$. Also, $\D\times\,\U'$ is a co-compact 
 nilpotent normal subgroup in $H'$ and it is also algebraic,  (see \cite{BoT} and \cite{PR} for details on algebraic groups). 
  
 By Kolchin's Theorem, there exists a flag $\{0\}=V_0\subset\cdots\subset V_k=\Q_p^n$ of maximal $\,\U'$-invariant subspaces such that $\U'$ acts trivially on 
$V_j/V_{j-1}$,  $j=1,\ldots, k$. Note that each $V_j$ is maximal in the sense that for any subspace $W$ containing $V_j$ such that $W\ne V_j$, 
$\,\U'$ does not act trivially on $W/V_{j-1}$. It is easy to see that each $V_j$ is $\ms$-invariant as $\ms$ normalises $\,\U'$. 
Now suppose $\ms/\D$ is not compact. Then there exists a sequence $\{T_i\}\subset\ms$ such that 
$\{\pi(T_i)\}$ is unbounded. Now we have $T_i=K_iD_iU_i$, $i\in\N$, where $D_i\in\D$, $U_i\in \U'$ and $K_i\in \pi^{-1}(K)$ such that $\{K_i\}$ is relatively compact and 
$\{U_i\}$ is unbounded. Note that for each $j$, as $\ms$, $\U'$ and $\D$ keep $V_j$  invariant, $K_i(V_j)=V_j$ for all $i$. 
 Passing to a subsequence if necessary, we get that there exists $w\in \Sc_n$ such that $\|U_i(w)\|_p\to \infty$,  $\T_i(w)\to w'\in \Sc_n$ and that $K_i\to K_0$. For every $v\in V_1$,
$T_i(v)=D_iK_i(v)\in V_1$, and hence $\{\|D_i^{-1}T_i(v)\|_p\}$ is bounded. Let $v\in V_1\setminus\{0\}$ be such that $\|v\|_p<1$. Then $v+w\in \Sc_n$.
As $\{\|K_iU_i(v+w)\|_p\}$ is unbounded and $K_iU_i(v)=K_i(v)\to K_0v$, we get that $\T_i(v+w)=\ol{K_iU_i}(v+w)\to w'$.
 This contradicts (i). Therefore, $\ms/\D$ is compact, i.e.\ $(i)\Rightarrow(iii)$. 

Suppose $\ol{\ms \D}/\D$ is a compact group. Then it contains $G\D/\D$, where $G$ is the group generated by $\ms$. Therefore, $G\D/\D$ is relatively compact. Using this fact, 
we want to show that $G$ acts distally on $\Sc_n$. Let ${\mathcal C}$ and ${\mathcal Z}$ be closed subgroups of $\D$ as above. Since the actions of both $G$ and 
$G{\mathcal C}$ on $\Sc_n$ are 
same, without loss of any generality we may replace $G$ by $G{\mathcal C}$ and assume that ${\mathcal C}\subset G$. We may also assume that $G$ is closed. Now, using 
the facts that $\D={\mathcal C}\times{\mathcal Z}$ and ${\mathcal Z}$ is compact, we get that $G\D=G{\mathcal Z}$ is closed. Now $G\D/\D$ is compact and it is isomorphic to 
$G/[(G\cap{\mathcal Z})\times{\mathcal C}]$. Therefore, $G/{\mathcal C}$ is compact, as $G\cap {\mathcal Z}$ is compact. Since the action of $G$ on $\Sc_n$ factors through 
${\mathcal C}$, the preceding assertion implies that $G$ acts 
distally on $\Sc_n$. Hence $(iii)\Rightarrow (ii)$. It is obvious that $(ii)\Rightarrow(i)$. 
\end{proof}

The following is a consequence of the above theorem.

\begin{cor} \label{c}
A semigroup $\ms$ of $SL(n,\Q_p)$ acts distally on 
$\Sc_n$ if and only if the closure of $\ms$ is a compact group. 
\end{cor}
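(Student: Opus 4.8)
The plan is to derive this directly from Theorem~\ref{cor-distal}, the only extra ingredient being that the centre $\D$ meets $SL(n,\Q_p)$ in a finite group (the $n$-th roots of unity in $\Q_p$), so that compactness can be transported back and forth between $SL(n,\Q_p)$ and $PGL(n,\Q_p)$. One direction is immediate: if $\overline{\ms}$ is a compact group then it acts distally, since compact groups act distally, and hence so does $\ms$. For the converse, suppose $\ms$ acts distally on $\Sc_n$. Since $\overline{\ms}$ is again a semigroup acting distally on $\Sc_n$, and it is contained in the closed group $SL(n,\Q_p)$, I may replace $\ms$ by $\overline{\ms}$ and assume $\ms$ is closed. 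Let $\pi\colon GL(n,\Q_p)\to PGL(n,\Q_p)$ be the natural projection. By Theorem~\ref{cor-distal}, $\overline{\pi(\ms)}=\overline{(\ms\D)/\D}$ is a compact subgroup of $PGL(n,\Q_p)$.

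The heart of the argument, and the step I expect to be the main obstacle, is lifting this compactness from $PGL(n,\Q_p)$ back to $SL(n,\Q_p)$. Given a sequence $\{s_i\}\subset\ms$, compactness of $\overline{\pi(\ms)}$ lets me pass to a subsequence with $\pi(s_i)\to\pi(g)$ for some $g\in GL(n,\Q_p)$. As $\pi$ is an open quotient map (quotient by the closed central subgroup $\D$) and $GL(n,\Q_p)$ is metrizable, I can lift the convergent sequence $\pi(s_i)\to\pi(g)$ to elements converging to $g$; these have the form $b_i s_i$ for suitable $b_i\in\Q_p\setminus\{0\}$, so $b_i s_i\to g$. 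Here the determinant does the decisive work: since $s_i\in SL(n,\Q_p)$, we have $b_i^{\,n}=\det(b_i s_i)\to\det g\ne 0$, so $|b_i|_p^{\,n}\to|\det g|_p>0$. Because the value group $\{p^m\mid m\in\Z\}$ is discrete, $|b_i|_p$ is eventually equal to some fixed $p^{m_0}$; hence $\{b_i\}$ lies in the compact sphere $\{b\in\Q_p\mid |b|_p=p^{m_0}\}$ and, after one more subsequence, $b_i\to b\ne 0$. Then $s_i=b_i^{-1}(b_i s_i)\to b^{-1}g$, and this limit lies in $SL(n,\Q_p)$, which is closed, with $\det(b^{-1}g)=\lim\det(s_i)=1$. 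Thus every sequence in $\ms$ has a subsequence converging inside $SL(n,\Q_p)$, so $\overline{\ms}=\ms$ is compact.

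Finally I would observe that $\ms$, being a compact subsemigroup of the topological group $SL(n,\Q_p)$, is in fact a subgroup: for each $s\in\ms$ the closure of $\{s^m\mid m\in\N\}$ is a compact abelian subsemigroup, and every compact Hausdorff semigroup contains an idempotent; in a group the only idempotent is the identity, so $\Id\in\ms$, and writing $\Id=\lim s^{m_k}$ with $m_k\to\infty$ gives $s^{-1}=\lim s^{m_k-1}\in\ms$ by closedness. (Alternatively, this is exactly the conclusion reached in the opening of the proof of Theorem~\ref{cor-distal} for closed distal semigroups inside $SL(n,\Q_p)$, via Proposition~\ref{act-Qp} and Lemma~\ref{distal-cpt}.) Hence $\overline{\ms}=\ms$ is a compact group, completing the nontrivial direction.
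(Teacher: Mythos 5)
Your proof is correct, and it follows the same overall strategy as the paper: both directions are reduced to Theorem~\ref{cor-distal}, and the substance lies in transporting compactness between $SL(n,\Q_p)$ and $PGL(n,\Q_p)$. The difference is in how that transfer is executed. The paper does it in one structural stroke: it invokes the facts that $SL(n,\Q_p)\D$ is closed in $GL(n,\Q_p)$ and that $SL(n,\Q_p)\cap\D$ is finite, from which $\ol{\pi(\ms)}$ is compact if and only if $\ol{\ms}$ is compact (essentially because $\pi$ restricted to $SL(n,\Q_p)$ is then a proper map onto a closed subgroup). You instead prove the transfer by hand: lift a convergent sequence $\pi(s_i)\to\pi(g)$ through the open quotient map to $b_is_i\to g$, and use $\det(b_is_i)=b_i^n\to\det g\neq 0$ together with discreteness of the value group to pin the scalars $b_i$ in a compact sphere, forcing a subsequence of $\{s_i\}$ itself to converge in $SL(n,\Q_p)$. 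What your route buys is self-containment: you never need to know that $SL(n,\Q_p)\D$ is closed (a fact the paper asserts without proof, resting on the theory of algebraic groups); the determinant does that work for you. What the paper's route buys is brevity and generality -- the same one-line argument applies verbatim to any subgroup $G$ with $G\D$ closed and $G\cap\D$ compact, which is exactly how the remark following the corollary extends the result to semigroups with $|\det(T)|_p=1$. Your closing step (a closed compact subsemigroup of a topological group is a group, via the idempotent argument) is also fine and simply makes explicit what the paper leaves implicit in the phrase ``the latter statement is equivalent to: $\ol{\ms}$ is a compact group.''
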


\begin{proof} Let $\pi: GL(n,\Q_p)\to GL(n,\Q_p)/\D$ be as above. By Theorem \ref{cor-distal}, $\ms$ acts distally on $\Sc_n$ if and only if $\ol{\pi(\ms)}$ is a compact group. Note that 
$\pi(\ol{\ms})\subset \ol{\pi(\ms)}$. As $SL(n,\Q_p)\D$ is closed and $SL(n,\Q_p)\cap\D$ is finite, we get that $\ol{\pi(\ms)}$ is compact 
if and only if $\ol{\ms}$ is compact; the latter statement is equivalent to the following: $\ol{\ms}$ is a compact group. Now the assertion follows from above. 
\end{proof}

\begin{rem} Note that the above corollary is valid for a semigroup $\ms\subset GL(n,\Q_p)$ satisfying the condition that $|\det(T)|_p=1$ for all $T\in\ms$, as the elements 
of $GL(n,\Q_p)$ satisfying the condition form a closed subgroup (say) $G$ such that $G\D$ is closed and $G\cap\D$ is a compact group (isomorphic to $\Z_p^*$).
\end{rem} 

In the real case, Corollary 5 in \cite{SY} showed that a semigroup $\ms\subset GL(n+1,\R)$ acts distally on the (real) unit sphere $\mbb{S}^n$ if (and only if) 
every cyclic subsemigroup of $\ms$ acts distally on $\mbb{S}^n$. The corresponding statement does not hold in the $p$-adic case, as there exists a class of 
closed non-compact subgroups of $SL(n,\Q_p)$, every cyclic subgroup of which 
is relatively compact but which do not act distally on $\Sc_n$ as they are not compact; e.g.\ the group of strictly upper triangular matrices in $SL(n,\Q_p)$, $n\geq 2$. 

\section{Distality of `affine' actions on $\mathcal{S}_n$}

In this section, we discuss  the `affine' maps on the $p$-adic unit sphere $\Sc_n$. Consider the affine action on $\Q_p^n$; $T_a(x)=a+T(x)$, $x\in\Q_p^n$, where 
$T\in GL(n,\Q_p)$, and $a\in \Q_p^n$. We 
first consider the corresponding `affine' map $\overline{T}_a$ on $\Sc_n$ which is defined for any nonzero $a$ satisfying $\|T^{-1}(a)\|_p\ne 1$ as follows: 
$\overline{T}_a(x)=\|T_a(x)\|_pT_a(x)$, $x\in \Sc_n$. 
(For $a=0$, $\overline{T}_a=\overline{T}$, which is studied in Section 2). Observe that $T_a(x)=0$ for some $x\in \Sc_n$ if and only if $T^{-1}(a)$ has norm 1. Therefore, 
$\ol{T}_a(\Sc_n)\subset \Sc_n$ if $\|T^{-1}(a)\|_p\ne 1$. The map $\T_a$ is a homeomorphism for any nonzero $a$ satisfying 
$\|T^{-1}(a)\|_p<1$ (see Lemma~\ref{e} below). In this section, we study the distality of such homeomorphisms $\overline{T}_a$.

\begin{lem}\label{e}
Let $T\in GL(n, \Q_p)$ and let $a\in\Q_p^n\setminus\{0\}$ be such that $\|T^{-1}(a)\|_p\ne 1$. Then the map $\overline{T}_a$ on $\Sc_n$ is continuous and injective. $\ol{T}_a$ 
is a homeomorphism if and only 
if $\|T^{-1}(a)\|_p<1$.
\end{lem}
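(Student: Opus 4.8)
The map $\ol{T}_a$ sends $x\in\Sc_n$ to the unique power-of-$p$ multiple of $T_a(x)=a+T(x)$ lying in $\Sc_n$, and this is well-defined precisely because $\|T^{-1}(a)\|_p\ne1$ forces $T_a(x)\ne0$ on $\Sc_n$, as already observed. The plan is to reduce everything to solving the equation $\ol{T}_a(x)=z$ explicitly. Writing $b=T^{-1}(a)$, for $z\in\Sc_n$ one has $\ol{T}_a(x)=z$ iff $a+T(x)=p^jz$ for some $j\in\Z$, i.e.\ iff $x=p^jT^{-1}(z)-b$ for some $j\in\Z$ with $\|x\|_p=1$. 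First I would dispose of continuity: $x\mapsto a+T(x)$ is continuous and nonvanishing on $\Sc_n$, the $p$-adic norm is locally constant on $\Q_p^n\setminus\{0\}$, so $x\mapsto\|T_a(x)\|_p$ (viewed as a power of $p$ in $\Q_p$) is locally constant, whence $\ol{T}_a$ is a product of continuous maps.

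The crux is the norm bookkeeping for the displayed equation. Fix $z\in\Sc_n$, set $u=T^{-1}(z)$ and $\|u\|_p=p^s$. I would show that there is \emph{at most one} integer $j$ with $\|p^ju-b\|_p=1$: comparing $\|p^ju\|_p=p^{s-j}$ with $\|b\|_p$ via the ultrametric inequality, one checks that $j$ is forced to equal $s$ when $\|b\|_p<1$, and forced to be the unique value with $\|p^ju\|_p=\|b\|_p$ when $\|b\|_p>1$, while every other $j$ gives $\|p^ju-b\|_p\ne1$. Uniqueness of $j$ immediately yields injectivity of $\ol{T}_a$ for either sign of $\|b\|_p-1$, and it also reduces surjectivity to whether the single candidate $x=p^ju-b$ actually has norm $1$. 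When $\|b\|_p<1$, taking $j=s$ gives $\|p^su\|_p=1>\|b\|_p$, so $\|x\|_p=1$ automatically and $\ol{T}_a(x)=z$; hence $\ol{T}_a$ is onto. When $\|b\|_p>1$, I would exhibit an omitted point: letting $\hat a\in\Sc_n$ be the normalization of $a$, any preimage $x$ must have the form $(p^m-1)b$, and $\|(p^m-1)b\|_p=|p^m-1|_p\,\|b\|_p$ can never equal $1$, because $|p^m-1|_p\in\{0\}\cup\{p^l:l\ge0\}$ never lies in $(0,1)$ whereas $\|b\|_p^{-1}$ does; thus $\hat a$ is not in the image and $\ol{T}_a$ is not onto.

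Finally I would assemble the homeomorphism statement: $\Sc_n$ is compact (a closed subset of $\Z_p^n$) and Hausdorff, so a continuous bijection $\Sc_n\to\Sc_n$ is automatically a homeomorphism; combined with the previous paragraph this shows $\ol{T}_a$ is a homeomorphism exactly when $\|T^{-1}(a)\|_p<1$, while in the case $\|T^{-1}(a)\|_p>1$ it is a continuous injection that fails to be surjective. The main obstacle is the surjectivity analysis, in particular identifying and verifying the explicitly omitted point when $\|T^{-1}(a)\|_p>1$; injectivity and continuity are routine once the equation $\ol{T}_a(x)=z$ is solved and the ultrametric inequality is applied. One minor point to track consistently is the standard identification of the real number $\|y\|_p=p^m$ with the scalar $p^m\in\Q_p$ used in the normalization, which enters every norm computation above.
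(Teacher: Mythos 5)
Your proposal is correct and follows essentially the same route as the paper's proof: the same explicit preimage $x=\|T^{-1}(z)\|_p\,T^{-1}(z)-T^{-1}(a)$ for surjectivity when $\|T^{-1}(a)\|_p<1$, the same omitted point $\|a\|_p\,a$ together with the observation that $|p^m-1|_p$ never lies in $(0,1)$ when $\|T^{-1}(a)\|_p>1$, and compactness of $\Sc_n$ to upgrade the continuous bijection to a homeomorphism. The only organizational difference is that you obtain injectivity from uniqueness of the admissible exponent $j$ in the fiber equation $a+T(x)=p^jz$, whereas the paper runs the equivalent ultrametric computation directly on two hypothetical preimages via $(\beta-1)T^{-1}(a)=y-\beta x$.
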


\begin{proof}
Suppose $\|T^{-1}(a)\|_p\ne 1$. From the definition of $\overline{T}_a$, it is obvious that it is continuous. Suppose $x,y\in \Sc_n$ such that $\overline{T}_a(x)=\overline{T}_a(y)$. 
Then $$\|a+T(x)\|_p\left(a+T(x)\right)=\|a+T(y)\|_p\left(a+T(y)\right)$$ 
or $(\beta-1)T^{-1}(a)=y-\beta x$, 
where $\beta=\|a+T(x)\|_p/\|a+T(y)\|_p=p^m$ for some $m\in\Z$. If possible, suppose $\beta\ne 1$. Interchanging $y$ and $x$ if necessary, we may assume that $\beta>1$ or 
equivalently, that $m\in\N$. 
This implies that $\|\beta x\|_p=|\beta|_p=p^{-m}<1$, and we get $$\|T^{-1}(a)\|_p=|\beta-1|_p\|T^{-1}(a)\|_p=\|y-\beta x\|_p=1,$$ 
a contradiction. Hence, $\beta=1$ and $x=y$. Therefore, $\overline{T}_a$ is injective.

Now suppose $\|T^{-1}(a)\|_p<1$. It is enough to show that $\T_a$ is surjective, as any continuous bijection 
on a compact Hausdorff space is a homeomorphism. 

Let $y\in\Sc_n$. Let $z=T^{-1}(y)$ and let $x=\|z\|_p z-T^{-1}(a)$.  Since the norm of $\|z\|_pz$ is $1$  and $\|T^{-1}(a)\|_p<1$, we have that $\|x\|_p=1$. Moreover, as $\|y\|_p=1$, 
we have that 
$\|z\|_p^{-1}=\|a+T(x)\|_p$. Therefore, $\T_a(x)=y$. Hence $\overline{T}_a$ is surjective. 

Conversely, Suppose $\T_a$ is surjective. Then there exists $x\in\Sc_n$ such that $\T_a(x)=\|a\|_pa$. We get that $x=(p^m-1)T^{-1}(a)$, where $p^m=\|a+T(x)\|_p^{-1}\|a\|_p$ 
for some $m\in\Z$; here 
$m\neq 0$ since $x\ne 0$. Now $1=\|x\|_p=|p^m-1|_p\|T^{-1}(a)\|_p\geq\|T^{-1}(a)\|_p$ since $|p^m-1|_p\geq 1$ for every $m\in \Z\setminus\{0\}$. As $\|T^{-1}(a)\|_p\ne 1$, we 
have that $\|T^{-1}(a)\|_p<1$.
\end{proof}

 In \cite{SY}, we have studied `affine' maps $\T_a$ on the real unit sphere $\mbb{S}^n$. The following result shows that in the 
 $p$-adic case, $\T_a$ is distal for every nonzero $a$ in a certain neighbourhood of 0 in $\Q_p^n$ if and only if $\T$ is distal. This illustrates  
 that the behaviour of such maps in the $p$-adic case is very different from that in the real case.

\begin{thm}\label{bar} 
Suppose $T\in GL(n,\Q_p)$. For $a\in \Q_p^n\setminus\{0\}$ with $\|T^{-1}(a)\|_p\ne 1$, let $\T_a: \mathcal{S}_n\to \mathcal{S}_n$ be defined as 
$\overline{T}_a(x)=\|a+T(x)\|_p(a+T(x))$, $x\in\Sc_n$. There exists a compact open subgroup $V\subset\Q_p^n$ 
such that for all 
$a\in V\setminus\{0\}$ we have $\|T^{-1}(a)\|_p<1$, $\overline{T}_a$ is a homeomorphism and the following hold:
\begin{enumerate}
\item[$({\rm I})$] If $\T$ is distal, then $\ol{T}_a$ is distal for all nonzero $a\in V$.
\item[$({\rm II})$] If $\T$ is not distal, then for every neighbourhood $U$ of 0 contained in $V$, there exists a nonzero $a\in U$ such that $\overline{T}_a$ is not distal.  
\end{enumerate}
\end{thm}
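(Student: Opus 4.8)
The plan is to show that, for small $a$, the affine map $\T_a$ and the linear sphere map $\T$ induce \emph{identical} pair‑distance dynamics on $\Sc_n$, so that distality transfers between them; part~(II) is then a direct construction of a proximal pair. First I would fix the compact open subgroup $V=p^{M}\Z_p^{\,n}=\{a\in\Q_p^n\mid\|a\|_p\le p^{-M}\}$ with $M\in\N$ large, chosen (in terms of $T$ alone) to validate every estimate below; for $a\in V\setminus\{0\}$ one has $\|T^{-1}(a)\|_p\le\|T^{-1}\|_p\|a\|_p<1$, so $\T_a$ is a homeomorphism by Lemma~\ref{e}. The mechanism is the ultrametric inequality: since $\|a\|_p<\|T^{-1}\|_p^{-1}\le\|T(x)\|_p$ for $x\in\Sc_n$, we get $\|a+T(x)\|_p=\|T(x)\|_p$, so $\T_a$ and $\T$ use the same normalising scalar. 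Writing $x_m=\T_a^{\,m}(x)$, $z_m=\T^{m}(x)$ and $e_m=x_m-z_m$, I would prove by induction the recursion $e_{m+1}=\nu_m\bigl(a+T(e_m)\bigr)$, $e_0=0$, where $\nu_m=\|T(z_m)\|_p$, and solve it as
\[
e_m=\sum_{i=0}^{m-1}\frac{\|T^m(x)\|_p}{\|T^i(x)\|_p}\,T^{m-1-i}(a).
\]

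To control $e_m$ I would invoke distality of $\T$. By Proposition~\ref{act-Qp} and Lemma~\ref{distal-cpt}, after passing to a power we may assume $S=p^{l}T^{N}$ is an isometry; then $\T^{N}=S|_{\Sc_n}$ is an isometry of $\Sc_n$ and, with $\rho=p^{l/N}$, one obtains the uniform comparison $C^{-1}\rho^{m}\|v\|_p\le\|T^{m}(v)\|_p\le C\rho^{m}\|v\|_p$ for all $v\in\Q_p^n$ and $m\in\Z$, with $C$ depending only on $T$. Substituting this into the closed form, the powers of $\rho$ cancel term by term, yielding a uniform bound $\|e_m\|_p\le C'\|a\|_p$; this simultaneously closes the induction (it keeps $\|a\|_p$ and $\|T(e_m)\|_p$ below $\nu_m=\|T(z_m)\|_p$, so the normalising scalar is indeed $\nu_m$ and the recursion is legitimate) and globally controls the error.

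The main obstacle is that the bound $\|e_m\|_p\le C'\|a\|_p$ need \emph{not} be smaller than $\|\T^{m}x-\T^{m}y\|_p$ for a close pair, so additivity of the error could a priori spoil distality. The genuinely $p$‑adic resolution is that $e_m(x)$ depends on $x$ only through the \emph{locally constant} norms $\|T^j(x)\|_p$. Indeed the comparison above gives a $\delta_0>0$, depending only on $T$, such that $\|x-y\|_p<\delta_0$ forces $\|T^{j}(x)\|_p=\|T^{j}(y)\|_p$ for all $j\in\Z$, whence $e_m(x)=e_m(y)$ and the errors cancel \emph{exactly}. For the complementary compact set of pairs with $\|x-y\|_p\ge\delta_0$, using that $\T^{N}$ is an isometry and only finitely many residual maps $\T^{r}$ $(0\le r<N)$ occur, one gets a uniform lower bound $\|\T^{m}x-\T^{m}y\|_p\ge\eta>0$; taking $M$ so large that $C'\|a\|_p<\eta$, the ultrametric inequality makes the $\T$‑term dominate. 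In both cases $\|\T_a^{\,m}x-\T_a^{\,m}y\|_p=\|\T^{m}x-\T^{m}y\|_p$ for every $m$, so the pair orbit closure of $\T_a$ misses the diagonal iff that of $\T$ does; as $\Sc_n$ is compact and $\T$ is distal, this proves~(I).

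For~(II), suppose $\T$ is not distal; by Proposition~\ref{act-Qp} this means $T$ has eigenvalues of at least two distinct $p$‑adic absolute values. Grouping generalised eigenspaces by the modulus of their eigenvalue gives a $T$‑invariant decomposition $\Q_p^n=A\oplus B$ over $\Q_p$ (cf.\ \cite{W}), where $A$ carries the top modulus $\rho_1$ and $B\ne\{0\}$ the smaller moduli, so $\|T^{m}(w)\|_p\le C(\rho_1')^{m}\|w\|_p$ on $B$ with $\rho_1'<\rho_1$. Given $U\subseteq V$, I would choose a small nonzero $a\in U\cap B$, a unit vector $x_0\in A\cap\Sc_n$, and a small nonzero $w_0\in B$, and set $x=x_0+w_0$, $y=x_0$. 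For $a,w_0$ small the $A$‑component dominates throughout the iteration, so $\T_a$ acts as a skew product: the $A$‑components of $x$ and $y$ remain equal, while
\[
\T_a^{\,m}x-\T_a^{\,m}y=\|T^{m}(x_0)\|_p\,T^{m}(w_0),
\]
of norm $\asymp(\rho_1'/\rho_1)^{m}\|w_0\|_p\to0$. Hence $(x,y)$ is a proximal pair and $\T_a$ is not distal, proving~(II). Finally I would record that all the thresholds on $M$ (for the homeomorphism property, the legitimacy of the recursion, and $C'\|a\|_p<\eta$) depend only on $T$, so a single $V$ serves throughout.
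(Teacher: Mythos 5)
Your proposal is sound, and it is worth separating the two halves. In part (I) you follow essentially the paper's route in different clothing: both arguments begin by using distality of $\T$ to produce an isometric power (your $S=p^{l}T^{N}$; in the paper $T^m=SD$ with $S$ an isometry and $D=p^l\Id$, via 3.3 of \cite{W} together with Proposition \ref{act-Qp}), and both then show that for $\|a\|_p$ small the pair distances of $\T_a$ agree with those of the linear action, splitting into the same two cases: nearby pairs, where the locally constant functions $x\mapsto\|T^j(x)\|_p$ coincide so the affine correction cancels exactly (your $e_m(x)=e_m(y)$ plays the role of the paper's $\gamma_{j,x}=\gamma_{j,y}$ in Eq.\ (4)), and separated pairs, where the correction is dominated in the ultrametric (your compactness constant $\eta$ replacing the paper's explicit threshold $c_0c_1^{-1}$); your closed form for $e_m$ is a repackaging of the paper's Eq.\ (1). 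In part (II), however, your construction is genuinely different: the paper puts the translation $a$ in the top eigenspace $H$ of the split part $D$ and must then run a nontrivial induction showing that the normalizing scalars $\beta_{j,y}$ of the perturbed point equal $\beta_{j,x}$; you instead put $a$ in the relatively contracted block $B$, so the top components of both orbit points follow the unperturbed $\T$-orbit of $x_0$, the normalizing scalars along the two orbits are manifestly equal, and the difference propagates exactly as $\|T^m(x_0)\|_p\,T^m(w_0)\to 0$. Your choice makes (II) a transparent skew-product argument; the paper's choice buys economy, since the single computation on $\Sc_n\cap H$ simultaneously yields (I) (when distality forces $H=\Q_p^n$) and the proximal pair for (II), and moreover shows $\T_a|_{\Sc_n\cap H}$ is an isometry. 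Two points you should still write out in full: the two-sided comparison $\|T^m(v)\|_p\asymp\rho^m\|v\|_p$ in (I), and the block growth bounds in (II), both rest on Wang's decomposition (including the $p$-adic fact that unipotent and anisotropic parts generate relatively compact groups), which is also what justifies your claim that non-distality of $\T$ yields two blocks with distinct growth rates; and in (II) the assertion that ``the $A$-component dominates throughout the iteration'' is itself a small induction (the $B$-components stay $O(\max\{\|a\|_p,\|w_0\|_p\})$ while the normalizing scalars remain $\|T(u_m)\|_p$), exactly parallel to the induction you carried out for $e_m$ in (I).
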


\begin{proof}
By 3.3 of \cite{W}, we get that there exist $D$ and $S$ which commute with $T$ and $m\in\N$ such that 
$T^m=SD=DS$, where $D$ is a diagonal matrix with the diagonal entries in $\{p^i\mid i\in\Z\}$ and $S$
generates a relatively compact group. Therefore, $S^k$ is an isometry for some $k\in\N$. Replacing $m$ by $km$, we may assume that $S$ itself is an isometry. Let 
$c_0=\min\{(1/\|T^{-j}\|_p)\mid 1\leq j\leq m-1\}$ and $c_1=\max\{\|T^j\|_p\mid 1\leq j\leq m-1\}$.  
As $\Sc_n$ is compact, $0<c_0\leq c_1<\infty$. Also, $c_0\leq \|T^j(x)\|_p\leq c_1$ for all $x\in \Sc_n$ and $1\leq j\leq m-1$. Since $\|T^j\|_p\in\{p^i\mid i\in\Z\}$, 
we get that $\{\|T^j(x)\|_p \mid x\in\Sc_n, 1\leq j\leq m-1\}$ is finite. 

Let $V$ be a compact open $S$-invariant subgroup in $\Q_p^n$ such that $V\cup c_0V\cup c_0^2V\cup c_0^2c_1^{-2}V\subset W=\{w\in \Q_p^n\mid \|w\|_p<1\}$. 
Then, $\|v\|_p<\min\{1,c_0,c_0^2,c_0^2c_1^{-2}\}\leq 1$ for all $v\in V$ and $c_0c_1^{-1}V\subset c_0^2c_1^{-2}V\subset W$. Moreover, $\|T^{-1}(v)\|_p<c_0^{-1}c_0=1$
for every $v\in V$.

 Let $p^l$ be the smallest nonzero entry in the diagonal matrix $D$ and let $H=\{x\in\Q_p^n\mid D(x)=p^lx\}$. 
This is a nontrivial closed subspace of $\Q_p^n$. As $S$ and $T$ commute with $D$, they keep $H$ invariant and, as $S$ is an isometry, $\|T^m(x)\|_p=p^{-l}\|x\|_p$ for all $x\in H$.

Let $a\in V\setminus\{0\}$. As noted above, $\|T^{-1}(a)\|_p<1$, and hence by Lemma \ref{e}, $\T_a$ is a homeomorphism. Take any $x\in \Sc_n$. 
Since $\|a\|_p<c_0$ and $\|T(x)\|_p\geq c_0$, we have $\|T_a(x)\|_p=\|a+T(x)\|_p=\|(T(x)\|_p$ and
$$\T_a(x)=\|T_a(x)\|_pT_a(x)=\|T(x)\|_p(a+T(x)).$$ 
Let $\ap_1(x)=\|T_a(x)\|_p=\|T(x)\|_p=\beta_{1,x}$. Let $\alpha_j(x)=\|T_a(\T_a^{j-1}(x))\|_p=\|a+T(\T_a^{j-1}(x))\|_p$ and 
let $\beta_{j,x}=\ap_1(x)\cdots\ap_j(x)$ for all $j\in\N$ ($j\geq 2$). Take $\beta_{0,x}=1$ and $\phi^0=\Id$ for any map $\phi$. From above, we have that 
$\ap_j(x)=\|T(\T_a^{j-1}(x))\|_p$ for all $j\in\N$. 
It is easy to show by induction that for every $j\in\N$,
\begin{equation}
\T_a^j(x)=\beta_{j,x}T^j(x)+\beta_{j,x}\sum_{i=1}^j\beta_{j-i,x}^{-1}T^{i-1}(a).
\end{equation}
Observe that as $a\in V$, $\|T^k(a)\|_p\leq c_1\|a\|_p<c_1(c_0c_1^{-1})=c_0$ and for any $x\in \Sc_n$, $\|T^k(x)\|_p\geq c_0$,  $1\leq k\leq m-1$. Therefore, for 
$j\in\N$ and $1\leq k\leq m-2$,
\begin{eqnarray*}
\|T^k(\T_a^j(x))\|_p&=&[\ap_j(x)]^{-1}\|T^k(a)+T^{k+1}(\T_a^{j-1}(x))\|_p\\
&=&[\ap_j(x)]^{-1}\|T^{k+1}(\T_a^{j-1}(x))\|_p.
\end{eqnarray*} 
Applying the above equation successively, we get that for $1\leq j\leq m-1$,
$\ap_j(x)=\|T(\T_a^{j-1}(x))\|_p=[\ap_{j-1}(x)\cdots\ap_1(x)]^{-1}\|T^j(x)\|_p$ i.e.\ $\beta_{j,x}=\|T^j(x)\|_p.$  
Hence, $c_0\leq \beta_{j,x}\leq c_1$ for all $x\in\Sc_n$ and $1\leq j\leq m-1$. 
Moreover, applying the same equation again successively, we get for $j\geq m$ that 
\begin{equation}
\ap_j(x)=[\ap_{j-1}(x)\cdots\ap_{j-m+1}(x)]^{-1}\|T^{m-1}(a)+T^m(\T_a^{j-m}(x))\|_p.
\end{equation}

Now we take $a\in V\cap H\setminus\{0\}$. Then $\T_a(\Sc_n\cap H)=\Sc_n\cap H$. Let $x\in \Sc_n\cap H$. Then $\|T^{-1}(a)\|_p<c_0^{-1}c_0=1$ and hence, 
$\|T^{-1}(a)+\T^{j-m}(x)\|_p=1$. This implies that 
\begin{equation}
\|T^{m-1}(a)+T^m(\T^{j-m}(x))\|_p=\|T^m(T^{-1}(a)+\T^{j-m}(x))\|_p=p^{-l}.
\end{equation}
Using Eqs.\ (2) and (3), we get 
 $\ap_j(x)=[\ap_{j-1}(x)\cdots\ap_{j-m+1}(x)]^{-1}p^{-l}$, and hence 
$\beta_{j,x}=p^{-l}\beta_{j-m,x}$ for all $j\geq m$. In particular, $\beta_{m,x}=p^{-l}=\|T^m(x)\|_p$. 
This implies that  
$\beta_{km+j,x}=p^{-kl}\beta_{j,x}=p^{-kl}\|T^j(x)\|_p$, $k,j\in\N$. Therefore, $\beta_{j,x}=\|T^j(x)\|_p$, $j\in\N$.  
Moreover, for all $j,k\in\Z$ and $x\in H$, $T^{km+j}(x)=p^{kl}S^kT^j(x)=p^{kl}T^jS^k(x)$ and, $\|T^{km+j}(x)\|_p=p^{-kl}\|T^j(x)\|_p$ as $S$ is an isometry. 
In particular, $\beta_{km+j,x}=p^{-kl}\|T^j(x)\|_p$ for all $k,j\in\Z$ such that $km+j\geq 0$.  Using the above facts together with Eq.\ (1), we get for $k\in\N$,
\begin{eqnarray*}
\T_a^{km}(x)&=&\beta_{km,x}T^{km}(x)+\beta_{km,x}\sum_{j=1}^{km}\beta_{km-j,x}^{-1}T^{j-1}(a)\\
&=&S^k(x)+\sum_{j=1}^{km}\|T^{-j}(x)\|_p^{-1}T^{j-1}(a)\\
&=&S^k(x)+\sum_{i=1}^k\sum_{j=1}^m\|T^{-j}(x)\|_p^{-1}T^{j-1}(S^{i-1}(a))\\
&=&S^k(x)+\sum_{j=1}^m\gamma_{j,x}^{-1}\,T^{j-1}(a_k),
\end{eqnarray*}
where $a_k=\sum_{i=1}^kS^{i-1}(a)\in V\cap H$, $k\in\N$, $\gamma_{j,x}=\|T^{-j}(x)\|_p=p^l\beta_{m-j,x}$ and $c_1^{-1}\leq\gamma_{j,x}\leq c_0^{-1}$, $1\leq j\leq m-1$, and 
$\gamma_{m,x}=p^l$. 
From above, we get that for any $k\in\N$ and $x,y\in \Sc_n\cap H$,
\begin{equation}
\T_a^{km}(x)-\T_a^{km}(y)=S^{k}(x-y)+\sum_{j=1}^{m-1}[\gamma_{j,x}^{-1}-\gamma_{j,y}^{-1}]\, T^{j-1}(a_k).\end{equation}
Let $x,y\in\Sc_n\cap H$ such that $\|x-y\|_p< c_0c_1^{-1}$.  As $T$ is linear, $T^j(x)=T^j(y)+T^j(x-y)$, $j\in\N$.  For $1\leq j\leq m-1$, 
as $\|T^j(x-y)\|_p\leq c_1\|x-y\|_p<c_0$, and $\|T^j(y)\|_p\geq c_0$, we get that $\beta_{j,x}=\|T^j(x)\|_p=\|T^j(y)\|_p=\beta_{j,y}$, and hence $\gamma_{j,x}=\gamma_{j,y}$. 
Therefore,  
$$\|\T_a^{km}(x)-\T_a^{km}(y)\|_p=\|S^k(x)-S^k(y)\|_p=\|x-y\|_p, \ k\in\N.$$ 
Now suppose $\|x-y\|_p\geq c_0c_1^{-1}$. 
Observe that $|\gamma_{j,x}^{-1}-\gamma_{j,y}^{-1}|_p\leq c_0^{-1}$, $\|T^j(a_k)\|_p\leq c_1\|a_k\|_p$, $1\leq j\leq m-1$ and $a_k\in V$, $\|a_k\|_p<c_0^2c_1^{-2}$. Now Eq.\ (4) 
implies that 
$\T_a^{km}(x)-\T_a^{km}(y) \in S^k(x-y)+c_0^{-1}c_1W$.  Since $\|S^k(x-y)\|_p=\|x-y\|_p\geq c_0c_1^{-1}$, we get that 
$\|\T_a^{km}(x)-\T_a^{km}(y)\|_p=\|x-y\|_p$. This shows that $\T_a^m|_{\Sc_n\cap H}$ preserves the distance and it is distal and hence, $\T_a|_{\Sc_n\cap H}$ is distal, 
where $a\in V\cap H$. 

If $\T$ is distal, then so is $\T^m$, and hence its image in $GL(n,\Q_p)/\D$ generates a relatively compact group. This implies that $D=p^l\Id$, $H=\Q_p^n$, $\Sc_n\cap H=\Sc_n$ 
and $V\cap H=V$. Therefore, (I) holds. 

Now suppose $\T$ is not distal. Then $\T^m$ is not distal and hence $D\ne p^l\Id$. Let $l_1>l$ and $H_1=\{x\in\Q_p^n\mid D(x)=p^{l_1}x\}$. Then $H_1$ is a vector 
subspace and it is invariant under $D$, $S$ and $T$. Let $a\in V\cap H\setminus\{0\}$ as above. It is easy to see that $\T_a(\Sc_n\cap (H\oplus H_1))=\Sc_n\cap (H\oplus H_1)$. 
 We show that the restriction of $\T_a$ to $\Sc_n\cap (H\oplus H_1)$ is not distal. This would imply that (II) holds. 

Take $y=x+z\in \Sc_n$, where $x\in\Sc_n\cap H$ and $z\in H_1$ such that $\|T^j(z)\|_p<\|T^j(x)\|_p$, $j\in\N$. It is possible to choose such a $z$; we can take $z\in H_1$ with the 
property that $\|T^j(z)\|_p<\|T^j(x)\|_p$
for all $0\leq j\leq m-1$, then as $S$ is an isometry, $\|T^{km+j}(z)\|_p=p^{-kl_1}\|T^j(z)\|_p<p^{-kl}\|T^j(x)\|_p=\|T^{km+j}(x)\|_p$, $k\in\N$. Now 
$\|T^j(y)\|_p=\|T^j(x)\|_p=\beta_{j,x}$ for all $j\in\N$. 
Here, 
$$\T^{km}(y)-\T^{km}(x)=p^{-kl}[S^k(p^{kl}x+p^{kl_1}z)]-S^k(x)=p^{k(l_1-l)}S^k(z)\to 0$$ 
as $k\to\infty,$ 
since $S$ is an isometry and since $l_1>l$. 
We now show for all $k\in\N$ that $\T_a^{km}(y)-\T_a^{km}(x)=\T^{km}(y)-\T^{km}(x)$. (From above, the latter is equal to $\beta_{km,x}T^{km}(z)$.) This in turn would imply that 
$\T_a$ is not distal. 

From Eq.\ (1), it is enough to show for all $j\in\N\cup\{0\}$ that $\beta_{j,y}=\beta_{j,x}$, or equivalently, $\beta_{j,y}=\|T^j(y)\|_p$ as the latter is equal to $\|T^j(x)\|_p$ which is the 
same as $\beta_{j,x}$. This is trivially true for $j=0$. 
As shown earlier, for $1\leq j< m-1$, $\beta_{j,u}=\|T^j(u)\|_p$ for all $u\in\Sc_n$, and hence $\beta_{j,y}=\beta_{j,x}$; i.e.\ the above statement holds for $1\leq j< m$, 
and we get that 
\begin{equation}
\T_a^j(y)=\beta_{j,y}T^j(y)+\beta_{j,y}\sum_{i=1}^j\beta_{j-i,y}^{-1}T^{i-1}(a)=\T_a^j(x)+\beta_{j,x}T^j(z).
\end{equation}
 We prove by induction on $k$ that $\beta_{j,y}=\beta_{j,x}=\|T^j(x)\|_p$ and Eq.\ (5) is satisfied for all $1\leq j< km$, $k\in\N$. We
have already proven these for $k=1$. Suppose for some $k\in\N$, these hold for all $j$ such that $(k-1)m\leq j< km$. Let $km\leq j< (k+1)m$.
Recall that for all $j\in\N$, $\ap_j(u)=\|T(\T_a^{j-1}(u))\|_p$, $u\in\Sc_n$, and Eq.\ (2) holds for any $x\in\Sc_n$ and $j\geq m$. 
As $\beta_{j,y}\beta_{j-m,y}^{-1}=\ap_j(y)\ldots\ap_{j-m+1}(y)$, 
from Eq.\ (2) and, also Eq.\ (5) which is assumed to hold for $(k-1)m\leq j<km$ by the 
induction hypothesis, we get for $x,y,z$ as above and $km\leq j< (k+1)m$ that 
\begin{eqnarray*}
\beta_{j,y}\beta_{j-m,y}^{-1}
&=&\|T^{m-1}(a)+T^m(\T_a^{j-m}(y))\|_p\\
&=&\|T^m[T^{-1}(a)+\T_a^{j-m}(x)+\beta_{j-m,x}T^{j-m}(z)]\|_p
\end{eqnarray*}
Now using this, we get that
\begin{equation*}
\beta_{j,y}\beta_{j-m,y}^{-1}=
\|S[p^l(T^{-1}(a)+\T_a^{j-m}(x))+p^{l_1}\beta_{j-m,x}T^{j-m}(z)]\|_p=p^{-l},
\end{equation*}
as $S$ is an isometry, $l_1>l$ and $\|\beta_{j-m,x}T^{j-m}(z)\|_p<1$ (see also Eq.\ (3)). Since $(k-1)m\leq j-m< km$, 
$\beta_{j,y}=p^{-l}\beta_{j-m,y}=\|p^lT^{j-m}(x)\|_p=\|T^j(x)\|_p$. Hence Eq.\ (5) holds for $km\leq j< (k+1)m$. Now by induction for all $j\in\N$, $\beta_{j,x}=\beta_{j,y}$ 
and Eq.\ (5) holds. 
Therefore, $\T_a$ is not distal. (Note that Eq.\ (5) also directly shows that 
$\T_a^{km}(y)-\T_a^{km}(x)=p^{k(l_1-l)}S^k(z)\to 0$ as $k\to\infty$). Now if $U\subset V$ is a neighbourhood of $0$, then $U\cap H\ne\{0\}$ and hence (II) holds.
\end{proof}

Observe that if $\T$ is not distal, then from Theorem \ref{bar} (II), we get that every neighbourhood of 0 in $\Q_p$ contains a nonzero $a$ such that 
$\|T^{-1}(a)\|_p<1$ and $\T_a$ is not distal. Now the following corollary is an easy consequence of Theorem \ref{bar}.  

\begin{cor} \label{cbar}
For $T\in GL(n,\Q_p)$, $\T$ is distal if and only if there exists a neighbourhood $V$ of 0 in $\Q_p^n$ such that for every $a\in V\setminus\{0\}$, $\|T^{-1}(a)\|_p<1$ and $\T_a$ on 
$\Sc_n$ is distal. 
\end{cor}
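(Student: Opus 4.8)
The plan is to deduce this directly from Theorem~\ref{bar}, proving the two implications separately and handling the converse by contraposition; essentially all of the analytic work has already been done, so no genuinely new argument should be needed.

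For the forward implication, suppose $\T$ is distal. I would simply take $V$ to be the compact open subgroup furnished by Theorem~\ref{bar}. Since $V$ is open and contains $0$, it is in particular a neighbourhood of $0$. By the conclusion of that theorem every nonzero $a\in V$ satisfies $\|T^{-1}(a)\|_p<1$ and makes $\T_a$ a homeomorphism, and by part~(I) the distality of $\T$ forces $\T_a$ to be distal for all such $a$. Thus this $V$ witnesses the right-hand side, and the forward direction is complete.

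For the reverse implication I would argue by contraposition: assume $\T$ is not distal and show that no neighbourhood $V$ can satisfy the stated property. This is exactly the content of the observation recorded just before the corollary, which is itself a consequence of Theorem~\ref{bar}(II): if $\T$ is not distal, then every neighbourhood $U$ of $0$ in $\Q_p^n$ contains a nonzero $a$ with $\|T^{-1}(a)\|_p<1$ and $\T_a$ not distal. One sees this by intersecting an arbitrary neighbourhood $U$ with the compact open subgroup $V_0$ of Theorem~\ref{bar}: the intersection is again a neighbourhood of $0$ contained in $V_0$, so part~(II) produces a nonzero $a$ in it with $\T_a$ not distal, and membership in $V_0$ guarantees $\|T^{-1}(a)\|_p<1$. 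Consequently, given any candidate neighbourhood $V$ of $0$, there is a nonzero $a\in V$ with $\T_a$ not distal, so $V$ fails the required condition. Hence no such $V$ exists, which is the contrapositive of ``the existence of $V$ implies $\T$ distal''.

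I do not expect a real obstacle here, since Theorem~\ref{bar} already carries all the weight. The only point demanding a little care is the difference in quantifier structure between the two halves: the forward direction only has to \emph{exhibit} one neighbourhood, and the explicit compact open subgroup does the job, whereas the reverse direction must \emph{rule out all} neighbourhoods at once. This is precisely why the ``every neighbourhood'' strengthening of part~(II), obtained by the intersection trick above, is the piece that is actually used rather than the bare statement of (II), which only quantifies over neighbourhoods contained in $V_0$.
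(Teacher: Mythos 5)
Your proposal is correct and follows exactly the route the paper intends: part (I) of Theorem~\ref{bar} gives the forward direction with the compact open subgroup as the witness, and the reverse direction is the contrapositive of the observation the paper records just before the corollary, which you correctly justify by intersecting an arbitrary neighbourhood with the subgroup from the theorem so that part (II) applies. The paper leaves these details implicit as "an easy consequence," and your write-up simply makes them explicit.
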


If $T$ is distal, then $\T$ is also distal and Theorem \ref{bar} (I) and Corollary \ref{cbar} holds for $T$. If $\T$ is distal, then for some $m\in\N$ and $l\in\Z$, $p^lT^m$ is distal. 

\subsection*{Acknowledgements.}
R.\ Shah would like to thank S.\,G.\ Dani and C.\,R.\,E.\ Raja for discussions on $p$-adic algebraic groups. R.\ Shah would like to acknowledge the support of DST (SERB) through the MATRICS 
research grant. A.\,K.\ Yadav would like to acknowledge the support for a research assistantship
from DST-PURSE grant in Jawaharlal Nehru University. We thank the referee for valuable comments and suggestions. 

\bibliographystyle{line}
\bibliography{JAMS-paper}

{\scriptsize
{\noindent Riddhi Shah \hspace{178pt}{\noindent Alok Kumar Yadav}\\  {School of Physical Sciences}\hspace{127pt}{School of Physical Sciences}\\ {Jawaharlal Nehru University (JNU)}\hspace{95pt}{Jawaharlal Nehru University (JNU)}\\ {New Delhi 110067, India}\hspace{136pt}{New Delhi 110067, India}\\ {\color{blue} rshah@jnu.ac.in}\hspace{169pt}{\color{blue} alokjnu90@gmail.com}\\ {\color{blue} riddhi.kausti@gmail.com}}

\end{document}